\renewcommand{\leq}{\leqslant}
\newtheorem{theorem}{Theorem}
\newtheorem{lemma}[theorem]{Lemma}
\begin{document}

\title{knot modules and ribbon 2-knots}

\author{Jonathan A. Hillman }
\address{School of Mathematics and Statistics\\
     University of Sydney, NSW 2006\\
      Australia }

\email{jonh@maths.usyd.edu.au}

\begin{abstract}
We show that every $\mathbb{Z}$-torsion free knot module 
is realized by a ribbon 2-knot with group $\pi$ of geometric dimension $\leq2$,
and give some partial results on the characterization of the knot modules of fibred ribbon 2-knots.
\end{abstract}

\keywords{deficiency, fibred, geometric dimension, 2-knot, ribbon}

\subjclass{57M45}

\maketitle

\section{Introduction}

The question underlying this note is whether knot groups 
with deficiency 1 are the groups of ribbon 2-knots.
Every group with deficiency 1 and weight 1 is the group of a 2-knot.
It is the group of a ribbon 2-knot if and only if 
it has a Wirtinger presentation of deficiency 1 \cite{Ya69}
(see also \cite [Section 1.7]{Hi1});
equivalently, if there is a deficiency-1 presentation such that the 
balanced presentation of the trivial group obtained by killing a meridian
is AC equivalent to the empty presentation \cite{Yo82}.
There is a parallel criterion in the fibred case.
A knot group is the group of a fibred homotopy ribbon 2-knot if and only if 
it has deficiency 1 and its commutator subgroup is finitely generated \cite[Theorem 17.9]{Hi2}.
If the Andrews-Curtis Conjecture holds then every group $\pi$ 
with  deficiency 1 and weight 1 is a ribbon 2-knot group,
while if the Whitehead Asphericity Conjecture holds then 
every such group has geometric dimension $\leq2$.

We shall show that every $\mathbb{Z}$-torsion free knot module 
is realized by a ribbon 2-knot with group $\pi$ such that $g.d.\pi\leq2$.
After first posting this paper on the arXiv, we learned that 
(apart from the consideration of geometric dimension)
Theorem 1 below is a special case of Theorem 3.2 of \cite{Ka08}, 
which characterizes the homology of infinite cyclic covers of ribbon links of surfaces in $S^4$.
However, our motivation for considering this issue is different,
and we hope that the present simple argument may still be of interest.

Whether knot modules which are finitely generated as abelian groups
can be realized by {\it fibred} ribbon 2-knots seems more difficult, 
and we have only partial results on this. 
Knot modules which are direct sums of cyclic modules 
and which are torsion-free and finitely generated as groups 
are realized by fibred ribbon knots.
(This is surely known, but we include a proof as Theorem 2.)
Beyond this, every knot module which is torsion-free and finitely generated as a group
may be realized either by a fibred 2-knot in a homology 4-sphere (Lemma 3)
or by a ribbon 2-knot whose group is an ascending HNN extension
(Lemma 4).

The final section records seven questions on the issues considered here;
deficiency, geometric dimension, knot modules and (fibred) ribbon knots.

\newpage
\section{terminology}
{\it 
A Wirtinger presentation\/} for a group $G$ is one in which each relator 
expresses one of the generators as a conjugate of itself or another.
Thus it has the form
\[
\langle{x_1,\dots,x_m}\mid{x_j=w_{ij}x_iw_{ij}^{-1}~\forall(i,j)\in{S}}\rangle,
\]
where each $w_{ij}$ is a word in the generators, indexed by a subset $S$ of $\{1,\dots,m\}^2$.
After introducing equal numbers of new generators and relators, we may assume that the
words $w_{ij}$ have length 1. We may then arrange that they have positive exponent.
The resulting presentation is determined by a {\it labeled oriented graph} $(\Gamma,\lambda)$
with vertices $V$, edges $E\subseteq{V^2}$, origin and terminus functions
$o,t:E\to{V}$, and a third (label) function $\lambda:E\to V$, 
and may be given by 
\[
\langle{V}\mid{t(e)=\lambda(e)o(e)\lambda(e)^{-1)}~\forall{e}\in{E}}\rangle.
\]
If the group $G$ has abelianization $G/G'\cong\mathbb{Z}$ and the presentation has deficiency 1
then $\Gamma$ is a tree, and so we have a {\it labelled oriented tree}, or LOT \cite{Ho85}.

\section{realizing knot modules by ribbon 2-knots}

Let $\Lambda=\mathbb{Z}[t,t^{-1}]$, and let  $\varepsilon:\Lambda\to\mathbb{Z}$
be the canonical augmentation homomorphism, determined by $\varepsilon(t)=1$.
A {\it knot module\/} is a finitely generated $\Lambda$-module $A$ such that
$\mathbb{Z}\otimes_\Lambda{A}=0$.
We shall say that $A$ can be realized by an $n$-knot $K$ 
if $A\cong\pi'/\pi''$, where $\pi=\pi{K}$ is the knot group.
If $\pi$ has deficiency 1 then $\pi'/\pi''$  is $\mathbb{Z}$-torsion free.
A $\mathbb{Z}$-torsion free knot module has finite rank as an abelian group 
and a square presentation matrix as a $\Lambda$-module \cite{Le77}.

\begin{theorem}
Every $\mathbb{Z}$-torsion free knot module is realized by a ribbon $2$-knot 
with group $\pi$ such that $g.d.\pi\leq2$.
\end{theorem}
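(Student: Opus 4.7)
\medskip
\noindent\textbf{Proof plan.}

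First, by Levine's result quoted above, $A$ admits a square presentation matrix over $\Lambda$. After the routine device of introducing auxiliary generators to clear higher powers of $t$, one can put this matrix in the normal form $M=tB-C$ with $B,C\in M_n(\mathbb{Z})$; the knot-module condition $\mathbb{Z}\otimes_\Lambda A=0$ then forces $\det(B-C)=\pm1$.

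Second, I would realize this matrix by an explicit ribbon 2-knot, along the lines of \cite{Ka08}. Take $n+1$ trivial 2-spheres in $S^4$ and attach $n$ ribbon 1-handles among them, with the $i$-th band routed through the other spheres according to the $i$-th rows of $B$ and $C$. One arranges the routing so that the corresponding Wirtinger relation reads $x_i=w_i\,x_0\,w_i^{-1}$ with Fox Jacobian row equal to the $i$-th row of $tB-C$. Then the presentation is Wirtinger of deficiency 1 (indeed a LOT presentation with $x_0$ as basepoint meridian), so by the criterion recalled in the introduction $\pi$ is a ribbon 2-knot group; deleting the column indexed by $x_0$ identifies $\pi'/\pi''$ with $\operatorname{coker} M=A$.

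Third, for the geometric-dimension claim, the ribbon handle decomposition lets $S^4\setminus K$ deformation retract onto the 2-complex $P$ of this LOT presentation, so it suffices to show $P$ is aspherical. This is the main obstacle I anticipate: in full generality the asphericity of LOT 2-complexes is the Whitehead asphericity conjecture in the LOT setting, which is open. The way forward is to use the freedom remaining in step 2 to arrange the band pattern so that $P$ satisfies a combinatorial asphericity criterion for LOTs (for example, a ``staggered'' ordering of relators compatible with the index order of the rows and columns of $M$, in the sense of \cite{Ho85}), and to exploit the $\mathbb{Z}$-torsion freeness of $A$ to argue that $\pi$ is locally indicable --- this is the hypothesis that typically triggers Howie-type asphericity theorems and is the leverage by which the $\mathbb{Z}$-torsion freeness of the module feeds into the dimension bound on $\pi$.
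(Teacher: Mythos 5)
Your first two steps are sound in outline and broadly parallel to the paper: one normalizes the square presentation matrix (the paper uses Trotter's sharper normal form $tM+(I_r-M)$ with $\det M\neq0$ and $\det(M-I_r)\neq0$, rather than a generic $tB-C$), writes down a deficiency-1 Wirtinger/LOT presentation realizing $A$ as $\pi'/\pi''$, and invokes Yajima's criterion to get a ribbon 2-knot. The genuine gap is in your third step, and you have correctly located it yourself: proving that the LOT 2-complex $P$ is aspherical is, in general, exactly the open Whitehead/LOT asphericity problem, and neither of your proposed escape routes is carried out or likely to succeed for an arbitrary module. A ``staggered'' ordering cannot be arranged for a general integer matrix $M$, and local indicability of $\pi$ is a property of the group, not of the particular 2-complex; Howie-type theorems need reducibility or tower hypotheses on the presentation itself, which you have not verified. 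As written, the geometric-dimension claim is not established.

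The paper sidesteps asphericity of the LOT complex entirely. Its presentation is rigged so that $\pi$ is \emph{visibly} an HNN extension with base the free group $F(r)=F(x_1,\dots,x_r)$ and associated subgroups $Y=\langle y_1,\dots,y_r\rangle$ and $Z=\langle x_1^{-1}y_1,\dots,x_r^{-1}y_r\rangle$: the hypotheses $\det(M)\neq0$ and $\det(M-I_r)\neq0$ guarantee that $Y$ and $Z$ each have rank-$r$ image in $F(r)/F(r)'$, hence (free groups being Hopfian) are themselves free of rank $r$. A graph of groups with free vertex and edge groups has a finite aspherical 2-complex as classifying space, so $g.d.\pi\leq2$ comes for free from the group-theoretic structure, with no appeal to any asphericity conjecture. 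This is the missing idea in your plan: choose the presentation so that the dimension bound is forced algebraically, rather than trying to prove the LOT complex itself is aspherical. The Wirtinger form is then recovered afterwards by the substitution $s_i=y_ity_i^{-1}$.
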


\begin{proof}
Let $A$ be a $\mathbb{Z}$-torsion free knot module.
Then $A$ has a presentation matrix of the form $\alpha=tM+(I_r-M)$,
where $M$ is an integer $r\times{r}$ matrix such that $\det(M)\not=0$ and
$\det(M-I_r)\not=0$ \cite{Tr74}.
Let $\pi$ be the group with presentation
\[
\langle{t,~x_1,\dots,x_r, y_1,\dots,y_r}
\mid{y_i=\Pi_jx_i^{m_{ij}}},
~ty_it^{-1}y_i^{-1}=x_i^{-1},
~\forall1\leq{i}\leq{r}
\rangle.
\]
(The order of the terms in the products $y_i=\Pi_jx_i^{m_{ij}}$
is not important.)
Then $\pi'$ is the normal closure of $\{x_1,\dots,x_r\}$.
Let $Y$ and $Z$ be the subgroups of $F(r)=F(x_1,\dots,x_r)$ 
generated by $\{y_1,\dots,y_r\}$ and $\{x_1^{-1}y_1,\dots,x_r^{-1}y_r\} $, respectively.
Since each has image of rank $r$ in $F(r)/F(r)'=\mathbb{Z}^r$,
and since  finitely generated free groups are Hopfian,
they are freely generated by these bases.
Hence $\pi$ is an HNN extension with base $F(r)$ and associated subgroups
$Y$ and $Z$, and so there is a $K(\pi,1)$ which is a finite 2-complex.
Moreover, it easy to see that $\pi'/\pi''\cong{A}$.

Now let $s_i=y_ity_i^{-1}$, for $1\leq{i}\leq{r}$.
Then $x_i=s_it^{-1}$, and so we can write each $y_j$ in terms of the $s_i$s and $t$.
Hence $\pi$ also has the deficiency-1 Wirtinger presentation
\[
\langle{t,~s_1,\dots,s_r}
\mid{s_i=y_ity_i^{-1}
~\forall1\leq{i}\leq{r}}
\rangle,
\]
and so $\pi$ is the group of a ribbon 2-knot.
\end{proof}

A similar argument may be used to show that the groups constructed 
in \cite[Theorem 11.1]{Le77} have Wirtinger presentations of deficiency 1.

If the knot module is cyclic then the construction of Theorem 1 gives a 1-relator group.
If $\pi$ is any 1-relator knot group then the relator is not a proper power,
and so $g.d.\pi\leq2$ \cite{Ly50}.
An argument related to that of the theorem shows that any such group 
has a deficiency-1 Wirtinger presentation.
However it need not have a 1-relator Wirtinger presentation.
Yoshikawa has shown that if a knot group has a 1-relator Wirtinger presentation 
then it is an HNN extension with free base and associated subgroups. 
He showed also that if $p,q,r,s\not =\pm1$ and $ps-qr=\pm1$ then the groups with presentation
$\langle{a,b,t}\mid{a^p=b^q},~ta^rt^{-1}=b^s\rangle$ have no such free base \cite{Yo88}.
If $r=p+1$ these groups have 1-relator presentations,
but do not have 1-relator Wirtinger presentations.
(For example, taking $p=2$, $q=r=3$ and $s=4$ gives the presentation
$\langle{b,t}\mid{(b^{-3}t^{-1}b^4t)^2=b^3}\rangle$.
Let  $s_i=b^{-i}tb^i$, for $i\in\mathbb{Z}$. 
Then we may write the relation as $b=s_5^{-1}s_1s_4^{-1}t$.
Using this relation to eliminate the generator $b$,
we obtain a deficency-1 Wirtinger presentation 
with 4 generators.)

If $g.d.\pi=2$ then $c.d.\pi=2$ and $\mathrm{def}(\pi)=1$.
If $\pi$ has deficiency 1 then $g.d.\pi=c.d.\pi$ \cite[Theorem 2.8]{Hi2}.
If $\mathcal{P}$ is a deficiency-1 presentation of a group $\pi$ of weight 1 
then adjoining a 2-cell to the  associated 2-complex $C(\mathcal{P})$ 
gives a contractible 2-complex.
Hence if the (finite) Whitehead Conjecture is true then
$C(\mathcal{P})$ is aspherical and so $g.d.\pi\leq2$.
If the Eilenberg-Ganea conjecture holds, then $c.d.\pi\leq2$ 
implies $g.d.\pi\leq2$.
It is known that these conjectures cannot both hold in general.
(However, the known counterexamples are not finitely presentable.)

If $c.d.\pi=2$ and $\mathcal{P}$ is any finite presentation for $\pi$ 
then $\pi_2(C(\mathcal{P}))$ is a finitely generated projective $\mathbb{Z}[\pi]$-module.
If, moreover, $\pi$ is of type $FF$ then $\pi_2(C(\mathcal{P}))$ is
stably free, by a Schanuel's Lemma argument.
After adjoining  2-cells along constant maps we may assume that
$\pi_2(C(\mathcal{P}))$ is a free module.
Adjoining 3-cells along a basis for this module then gives 
a finite 3-dimensional $K(\pi,1)$-complex.

\section{finitely generated modules and fibred 2-knots}

It is not known whether  every $\mathbb{Z}$-torsion free knot module $A$ which 
is finitely generated as an abelian group is realized 
by a high dimensional knot group with finitely generated commutator subgroup.
In this section we shall consider the sharper question, 
whether such a knot module is realized by  a fibred ribbon 2-knot.

For knot groups $\pi$ with $\pi'$ finitely generated the conditions
``$\pi'$ {\it is a free group}", ``$g.d.\pi\leq2$" and ``$\mathrm{def}(\pi)= 1$"  
are equivalent, while for any knot group , ``$g.d.\pi\leq2$"
if and only if  ``$c.d.\pi\leq2$ {\it and} $\mathrm{def}(\pi)=1$".
(See \cite[Chapter 2]{Hi2} for the less obvious implications.)
Knot groups with $\pi'$ free are the groups of fibred knots with fibre 
a once-punctured connected sum of copies of $S^2\times{S^1}$.
A fibred 2-knot with group $\pi$ is homotopy ribbon if and only if $\pi'$ is free \cite{Co83}.
A 2-knot $K$ is $s$-concordant to a fibred homotopy ribbon knot if and only if
$\mathrm{def}(\pi)=1$ and $\pi'$ is finitely generated \cite[Theorem 17.9]{Hi2}.
The next result is largely known, but is included for completeness.

\begin{theorem}
Let $A$ be a knot module which is a direct sum of cyclic $\Lambda$-modules
and is free as an abelian group.
Then $A$ is realized by a fibred ribbon $2$-knot.
\end{theorem}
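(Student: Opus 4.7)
The plan is to realize each cyclic summand of $A$ by a fibred ribbon 2-knot and then take a connected sum.  Since connected sum of fibred ribbon 2-knots is again fibred ribbon and induces direct sum on Alexander modules, it suffices to produce, for each $i$, a fibred ribbon 2-knot with Alexander module $\Lambda/(\lambda_i)$.

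For a cyclic summand $\Lambda/(\lambda)$, the hypotheses force (after multiplication by a unit of $\Lambda$) $\lambda$ to be a polynomial of some degree $n$ with leading and constant coefficients both $\pm 1$ and with $\lambda(1)=\pm 1$.  The companion matrix $C$ of $\lambda$ then lies in $GL_n(\mathbb{Z})$, as does $I-C$.  Using the classical surjection $\mathrm{Aut}(F_n)\to GL_n(\mathbb{Z})$, lift $C$ to an automorphism $\sigma$ of $F_n$; the semidirect product $\pi=F_n\rtimes_\sigma\mathbb{Z}$ then has abelianization $\mathbb{Z}$, free commutator subgroup $F_n$, and Alexander module $\Lambda^n/(tI-C)\Lambda^n\cong\Lambda/(\lambda)$.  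A knot group with free commutator subgroup is automatically the group of a fibred 2-knot (with fibre a once-punctured connected sum of copies of $S^2\times S^1$), as noted just before the theorem, so it suffices to exhibit $\pi$ as a ribbon 2-knot group.  This can be done by decomposing $\sigma$ into Nielsen transformations and converting the decomposition into a deficiency-1 Wirtinger presentation, each elementary Nielsen move contributing one Wirtinger relator in the style of Yajima's construction; the resulting fibred ribbon 2-knots $K_i$ realizing $\Lambda/(\lambda_i)$ then have connected sum realizing $A$.

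The main obstacle is the conversion of the Nielsen decomposition into a deficiency-1 Wirtinger presentation.  It is tempting to bypass this by applying the construction of Theorem~1 directly with $M:=(I-C)^{-1}$, which satisfies $tM+(I-M)=M(tI-C)$ and so presents $\Lambda/(\lambda)$: that construction yields a ribbon 2-knot group immediately, but a Stallings-folding check on $\lambda=t^2-3t+1$ with the natural ordering of factors in the $y_i$ shows that the resulting HNN extension is only ascending, rather than a semidirect product, so $\pi'$ fails to be finitely generated and the knot is not fibred.  Thus getting both the ribbon and the fibred properties simultaneously seems to require the more careful Nielsen-based (or equivalent LOT-style) construction, where one must verify that each step introduces exactly one new Wirtinger relator and preserves deficiency~1.
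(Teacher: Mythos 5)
Your reduction to the cyclic case via connected sum matches the paper, and your aside about why the Theorem~1 construction does not obviously yield a fibred knot is a reasonable observation. But the core of your argument for a cyclic summand $\Lambda/(\lambda)$ has a genuine gap, and it is exactly the gap you flag yourself at the end: you form $\pi=F_n\rtimes_\sigma\mathbb{Z}$ by lifting the companion matrix $C$ to $\mathrm{Aut}(F_n)$, and then assert that a Nielsen decomposition of $\sigma$ can be ``converted into a deficiency-1 Wirtinger presentation.'' No procedure for this conversion is given, and none is known in general: an arbitrary lift $\sigma$ of $C$ need not even give a group of weight~$1$ (the paper's Lemma~3 performs precisely your construction and, for this reason, only obtains a fibred 2-knot in a \emph{homology} 4-sphere), and producing a deficiency-1 Wirtinger presentation for such a semidirect product is essentially Questions~(1) and~(3) of the paper, i.e.\ an open problem closely tied to the Andrews--Curtis Conjecture. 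So your argument establishes the fibred property but leaves the ribbon property unproved, whereas the whole difficulty of the theorem is achieving both simultaneously.

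The paper's proof avoids this by being completely explicit in the cyclic case. Writing $\alpha=\Sigma a_kt^k$ with $|a_0|=|a_r|=1$ and $\varepsilon(\alpha)=1$, it sets $\beta=(\alpha-1)/(t-1)=\Sigma_{i=0}^{r-1}b_it^i$ (so $|b_0|=|b_{r-1}|=1$) and takes the one-relator group
\[
\pi=\langle t,x\mid x=wtw^{-1}t^{-1}\rangle,\qquad w=\Pi_{i=0}^{r-1}t^ix^{b_i}t^{-i}.
\]
The relation $\alpha=(t-1)\beta+1$ gives $\pi'/\pi''\cong\Lambda/(\alpha)$; the unit extreme coefficients $b_0,b_{r-1}=\pm1$ let one solve for $t^rxt^{-r}$ and $t^{-1}xt$ in terms of $\{x,txt^{-1},\dots,t^{r-1}xt^{1-r}\}$, so $\pi'$ is finitely generated; and the substitution $u=xt$ turns the presentation into a one-relator Wirtinger presentation $\langle t,u\mid u=WtW^{-1}\rangle$, which Yajima realizes by a ribbon knot of 1-fusion. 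Yoshikawa's theorem that 1-fusion ribbon knots with finitely generated commutator subgroup are fibred then finishes the cyclic case. If you want to salvage your approach, you would need to either carry out the Nielsen-to-Wirtinger conversion explicitly (verifying weight~$1$ and deficiency~$1$ at each step) or, better, replace the companion-matrix construction by an explicit presentation of the above form.
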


\begin{proof}
Suppose first that $A$ is cyclic.
We may assume that
$A\cong\Lambda/(\alpha)$,
where $\alpha=\Sigma{a_k}t^k$ is a polynomial 
of degree $r$ in $\mathbb{Z}[t]$ with nonzero constant term
and $\varepsilon(\alpha)=1$.
Then $|a_0|=|a_r|=1$,
since $A$ is finitely generated as an abelian group.
Let $\beta=(\alpha-1)/(t-1)$. 
Then $\beta=\Sigma_{i=0}^{i=r-1}b_it^i\in\mathbb{Z}[t]$,
and $|b_0|=|b_{r-1}|=1$.
Let $\pi$ be the group with presentation
\[
\langle{t,x}\mid{x=wtw^{-1}t^{-1}}\rangle,
\] 
where
$w=\Pi_{i=0}^{i=r-1}{t^i}x^{b_i}t^{-i}$.
Then $\pi$ is the normal closure of $t$, and so has weight 1,
while $\pi'/\pi''\cong\Lambda/(\alpha)$, since $\alpha=(t-1)\beta+1$.
The commutator subgroup is the normal closure of $x$.
We see easily that (since $b_{r-1}=\pm1$) we can solve for
$t^rxt^{-r}$ in terms of the finite set $\{x,txt^{-1},\dots,t^{r-1}xt^{1-r}\}$.
Similary (since $b_0=\pm1$) we may solve for $t^{-1}xt$ in terms of the same set.
It follows that $\pi'$ is finitely generated.

Let $u=xt$ and rewrite $w$ as a word $W$ in $\{t,u\}$. 
Then $\pi$ also has the 1-relator Wirtinger presentation
\[
\langle{t,u}\mid{u=WtW^{-1}}\rangle.
\] 
Such presentations are realized by ribbon knots of 1-fusion \cite{Ya69}. 
(See also \cite[Theorem 7.17]{Hi1}.)
Ribbon knots of 1-fusion with finitely generated commutator subgroup
are fibred \cite{Yo81}. 
(See also \cite[Appendix]{AS88}.)

Taking sums of knots shows that if $A$ is a nontrivial direct sum of cyclic $\Lambda$-modules
then $A$ is realized by a fibred ribbon knot.
\end{proof}

In general, it seems difficult to realize a knot module $A$ by a group of deficiency 1
which has both weight 1 and also finitely generated commutator subgroup.
Taken separately, these tasks are easy.
We do not need the weight-1 condition to find a fibred knot in 
an homology 4-sphere which realizes $A$.

\begin{lemma} 
Let $A$ be a knot module which is free of rank $r$ as an abelian group.
Then $A$ is realized by a fibred $2$-knot in an homology $4$-sphere.
\end{lemma}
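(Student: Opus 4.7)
The plan is to realize $A$ as the first homology of the fibre of a fibred $2$-knot whose exterior is a mapping torus built from a self-diffeomorphism of $\#^r(S^1\times S^2)$. Since $A$ is free of rank $r$ as an abelian group, the $\Lambda$-structure identifies $A$ with $\mathbb{Z}^r$ on which $t$ acts by some matrix $N\in GL(r,\mathbb{Z})$, and the knot-module condition $\mathbb{Z}\otimes_\Lambda A=0$ becomes $\det(I_r-N)=\pm1$.

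Next I would set $\hat F=\#^r(S^1\times S^2)$, so that $\pi_1(\hat F)\cong F(r)$ and $H_1(\hat F)=\mathbb{Z}^r$. The obvious diffeomorphisms that slide, invert and permute the $S^1$-summands realize elementary matrices in $GL(r,\mathbb{Z})$, so there is an orientation-preserving diffeomorphism $\hat\phi:\hat F\to\hat F$ inducing $N$ on $H_1$. After isotopy we may assume $\hat\phi$ is the identity on a closed ball $D^3\subset\hat F$. Let $F=\hat F\setminus\mathrm{int}(D^3)$, $\phi=\hat\phi|_F$, and let $X=T_\phi$ be the mapping torus; then $\partial X=S^2\times S^1$. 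Glue in $S^2\times D^2$ by the identity on the boundary to obtain a closed $4$-manifold $\Sigma$, and let $K=S^2\times\{0\}\subset S^2\times D^2$. Then $K$ is a $2$-sphere in $\Sigma$ whose exterior $X$ fibres over $S^1$ with fibre $F$, so $K$ is a fibred $2$-knot in $\Sigma$.

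To check that $\Sigma$ is a homology $4$-sphere and that $\pi'/\pi''\cong A$, I would proceed as follows. We have $H_*(F)=(\mathbb{Z},\mathbb{Z}^r,\mathbb{Z}^r,0)$, and Poincaré duality on the oriented closed $3$-manifold $\hat F$ forces $\phi_*$ to act on $H_2(F)$ by the contragredient matrix $(N^T)^{-1}$. Since $\det(I_r-N)=\pm1$ gives also $\det(I_r-(N^T)^{-1})=\pm1$, the Wang sequence for $T_\phi$ yields $H_*(X)=H_*(S^1)$. A Mayer--Vietoris computation for $\Sigma=X\cup(S^2\times D^2)$---using that the meridian generates $H_1(X)$ and the $S^2$-factor generates $H_2(S^2\times D^2)$---then shows $H_*(\Sigma)=H_*(S^4)$. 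Finally, the infinite cyclic cover of $X$ deformation retracts to $F$, so $\pi'/\pi''=H_1(F)=\mathbb{Z}^r$ with $t$ acting by $\phi_*=N$, matching $A$ as a $\Lambda$-module.

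The main obstacle is producing the diffeomorphism $\hat\phi$ realizing the prescribed $N$ on $H_1(\hat F)$, for which one invokes the classical surjection from the mapping class group of $\#^r(S^1\times S^2)$ onto $GL(r,\mathbb{Z})$ (a mild consequence of Laudenbach's theorem). Once this is in hand, the remaining verifications are routine Wang and Mayer--Vietoris calculations, and the hypothesis $\det(I_r-N)=\pm1$ is used precisely where it must be, namely to force $I-\phi_*$ to be an automorphism on $H_1(F)$ and on $H_2(F)$.
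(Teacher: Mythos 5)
Your proposal is correct and follows essentially the same route as the paper: realize the $t$-action by a self-diffeomorphism of $\#^r(S^1\times S^2)$ inducing the given matrix on $H_1$, form the mapping torus, and surger out the section circle (your removal of a ball and gluing in of $S^2\times D^2$ is exactly that surgery), with the cocore sphere as the fibred $2$-knot. The only differences are cosmetic: you realize the matrix directly by slide/permute/invert diffeomorphisms where the paper first lifts it to an automorphism of $F(r)$ and then to a based diffeomorphism, and you spell out the Wang and Mayer--Vietoris checks that the paper leaves implicit.
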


\begin{proof}
The action of $t$ on $A$  determines a matrix $T\in{GL(r,\mathbb{Z})}$
such that $T-I_r$ is also invertible.
Since the canonical map from $Aut(F(r))$ to $GL(r,\mathbb{Z})$ is onto,
there is an automorphism $\tau$ of $F(r)$ which abelianizes to $T$.
Let $G=F(r)\rtimes_\tau\mathbb{Z}$.
Then $G/G'\cong\mathbb{Z}$, $G'\cong{F(r)}$ and $G'/G''\cong{A}$.

The automorphisms $\tau$ can be realized by a based self-diffeomorphism 
$f_\tau$ of $N=\#^r(S^3\times{S^1})$, and so $G\cong\pi_1(M(f_\tau))$,
where $M(f_\tau)$ is the mapping torus of $f_\tau$.
Surgery on the section of $M(f_\tau)$ determined by the basepoint of $N$
gives a homology 4-sphere $\Sigma$,
and the cocore of the surgery is a fibred 2-knot in $\Sigma$ with group $G$.
\end{proof}

If we try a similar approach to constructing a knot in $S^4$, 
we loose some control of the commutator subgroup. 
However we can improve slightly on Theorem 1.

\begin{lemma} 
Let $A$ be a knot module which is free of rank $r$ as an abelian group.
Then $A$ is realized by a ribbon $2$-knot whose group
is an ascending HNN extension with base $F(r)$.
\end{lemma}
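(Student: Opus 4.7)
The plan is to refine the construction of Theorem 1 by replacing the monomial words $y_i=\Pi_jx_j^{m_{ij}}$ with images of the $x_i$ under an automorphism of $F(r)$, thereby forcing the associated subgroup of the resulting HNN extension to coincide with the full base.

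Since $A\cong\mathbb{Z}^r$ as an abelian group and $(t-1)$ acts invertibly on $A$, the $t$-action is given by a matrix $T\in GL(r,\mathbb{Z})$ with $T-I_r\in GL(r,\mathbb{Z})$. Setting $M=(I_r-T)^{-1}$ gives $M\in GL(r,\mathbb{Z})$, and the identity $M-I_r=MT$ shows that $M-I_r\in GL(r,\mathbb{Z})$ as well. Multiplying $\alpha=tM+(I_r-M)$ on the right by $M^{-1}$ yields $tI_r-T$, so $\alpha$ presents $A$. Since the canonical map $Aut(F(r))\to GL(r,\mathbb{Z})$ is onto, I pick $\sigma\in Aut(F(r))$ lifting $M$ and set $y_i=\sigma(x_i)$. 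I then define
\[
\pi=\langle t,\ x_1,\ldots,x_r,\ y_1,\ldots,y_r \mid y_i=\sigma(x_i),\ ty_it^{-1}y_i^{-1}=x_i^{-1},\ 1\leq i\leq r\rangle.
\]
Eliminating the $y_i$ presents $\pi$ as $\langle t,x_1,\ldots,x_r \mid t\sigma(x_i)t^{-1}=x_i^{-1}\sigma(x_i)\rangle$, and abelianizing $\pi'$ as in Theorem 1 gives $\pi'/\pi''\cong A$.

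Viewed as an HNN extension with base $F(r)=F(x_1,\ldots,x_r)$ and stable letter $t$, the associated subgroups are $Y=\langle\sigma(x_1),\ldots,\sigma(x_r)\rangle$ and $Z=\langle x_1^{-1}\sigma(x_1),\ldots,x_r^{-1}\sigma(x_r)\rangle$. Because $\sigma$ is an automorphism, $\{\sigma(x_i)\}$ is a free basis of $F(r)$, so $Y=F(r)$. The $x_i^{-1}\sigma(x_i)$ project to the columns of $M-I_r\in GL(r,\mathbb{Z})$ in $F(r)/F(r)'=\mathbb{Z}^r$, so the same Nielsen--Schreier and Hopficity argument used in Theorem 1 shows that $Z$ is free of rank $r$ on these generators, and the assignment $\sigma(x_i)\mapsto x_i^{-1}\sigma(x_i)$ extends to an isomorphism $Y\to Z$. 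Hence $\pi$ is an \emph{ascending} HNN extension with base $F(r)$.

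Finally, applying the Wirtingerization step from Theorem 1 --- putting $s_i=y_ity_i^{-1}$, so $x_i=s_it^{-1}$ and each $y_i=\sigma(x_i)$ rewrites as a word $Y_i(t,s_1,\ldots,s_r)$ --- converts the relations into $s_i=Y_itY_i^{-1}$, a deficiency-1 Wirtinger presentation of $\pi$, so $\pi$ is a ribbon 2-knot group. The main delicate point is confirming that the partial assignment $Y\to Z$ really is a bijection, and this is exactly where replacing Theorem 1's monomial basis $\{\Pi_jx_j^{m_{ij}}\}$ with the automorphic one $\{\sigma(x_i)\}$ is essential: it ensures $Y$ coincides with the full base $F(r)$.
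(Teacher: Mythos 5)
Your proposal is correct and follows essentially the same route as the paper's proof: both realize $A$ by an ascending HNN extension with base $F(r)$ by lifting a suitable integer matrix to an automorphism of $F(r)$ via the surjection $Aut(F(r))\to GL(r,\mathbb{Z})$, and both then apply the Wirtinger rewriting from Theorem 1. The only differences are cosmetic --- the paper lifts $M=T-I_r$ and uses the relators $tx_it^{-1}=x_i\mu(x_i)$, whereas you lift $M=(I_r-T)^{-1}$ and keep the relator shape of Theorem 1 --- and your verification that the ascending condition holds (one associated subgroup equal to the full base) is exactly the point the paper leaves implicit.
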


\begin{proof}
Since $A$ is a knot module, $t-1$ acts invertibly, 
and so there is a matrix $M\in{GL(r,\mathbb{Z})}$
such that $(t-1)X=MX$ for all $X\in{A}$.
Let $\mu$ be an automorphism of $F(r)$ which induces $M$ on $F(r)/F(r)'=\mathbb{Z}^r$.
Then the group $\pi$ with presentation
\[
\langle
{t,~x_1,\dots,x_r}\mid{tx_it^{-1}=x_i\mu(x_i)}~\forall1\leq{i}\leq{r}
\rangle
\]
is an ascending HNN extension with base $F(r)$,
and $\pi'/\pi''\cong{A}$.
As in Theorem 1, it has a deficiency-1 Wirtinger presentation,
and so is the group of a ribbon 2-knot.
\end{proof}

We may find such an automorphism $\mu$ by 
using row reduction to express $M$ as a product of elementary
matrices, and then lifting each factor in the obvious way.

The endomorphism $\theta$ determined by $\theta(x_i)=x_i\mu(x_i)$ 
induces an isomorphism on abelianiztion.
Since $H_2(F(r);\mathbb{Z})=0$, it also induces automorphisms of all 
the quotients $F(r)/F(r)_{[n]}$ of the lower central series \cite{St65}.
Can we choose $\mu$ so that $\theta$ is also an automorphism?

\section{some questions}

It is possible that every 2-knot with group of deficiency 1 may be a ribbon 2-knot.
If so, this would relate the somewhat opaque notion of deficiency 
to an immediately understandable geometric condition.
The following questions reflect our lack of a fuller understanding of the connections 
between these notions.

Three of these questions are closely related to the Andrews-Curtis Conjecture,
through \cite{Yo82}.
The algebraic version of this conjecture is that every balanced presentation 
$\mathcal{P}=\langle{x_1,\dots,x_m}\mid{w_1,\dots,w_m}\rangle$
of the trivial group may be reduced to the empty presentation by a limited class of moves
(AC {\it equivalence}).
We may replace relators by their inverses, conjugates and products with another relator,
and we may introduce or remove a generator $y$ and a relator $yz$ with $z$ a word in the
other generators.
In \cite{Yo82} it is shown that if $\mathcal{P}_o$ is a deficiency-1 presentation of a knot group $G$ and 
$\mu$ is a normal generator for $G$ then $G$ has a deficiency-1 Wirtinger presentation
in which $\mu$ is one of the generators if and only if $\mathcal{P}=\mathcal{P}_o\cup\{\mu=1\}$
is AC equivalent to the empty presentation.

\begin{enumerate}

\item{}If  $A$ is a knot module which is free of rank $r$ as an abelian group,
is it realized by a 2-knot with group $\pi$ such that $\pi'$ is free?

\item{}Do deficiency-1 knot groups have deficiency-1 Wirtinger presentations?

\item{}Is every knot group $\pi$ with $\pi'$ free the group of a ribbon 2-knot?

\item{}If $t\in\pi$ normally generates a knot group $\pi$ with a deficiency-1 Wirtinger presentation,
does $\pi$ have such a presentation for which $t$ is the image of a generator?

\item{}If $K$ is a ribbon 2-knot with group $\pi$ such that $\pi'$ is free is $K$ fibred?

\item{}Do deficiency-1 knot  groups have geometric dimension $\leq2$?

\item{}If $\pi$ is a high dimensional knot group such that $c.d.\pi=2$ is $g.d.\pi=2$?

\end{enumerate}

Question (1) may be reformulated as follows.
Let $P\in{GL(r,\mathbb{Z})}$ be a matrix such that $\det(P-I_r)=\pm1$.
Is there an automorphism $\tau$ of $F(r)$ with abelianization $P$
and such that $\langle{x_1,\dots,x_r}\mid{x_i=\tau(x_i)~\forall1\leq{i}\leq{r}}\rangle$
is a presentation of the trivial group?
The results of \S4 above are partial answers to this question.

Questions (2-4) would all have positive answers if the Andrews-Curtis Conjecture is true.
However our questions consider an a priori narrower class of presentations,
and so some may have positive answers even if 
the Andrews-Curtis Conjecture fails.
For instance,  (3) may be reformulated as follows.
Suppose that $\theta$ is an automorphism of $F(r)$ such that 
$\langle{x_1,\dots,x_r}\mid{x_i=\theta(x_i)~\forall1\leq{i}\leq{r}}\rangle$
is a presentation of the trivial group.
Is this presentation AC equivalent to the empty presentation?

If $\pi$ has a  Wirtinger presentation $\mathcal{P}$ of deficiency 1 and
$g$ is a generator then $\mathcal{P}\cup\{g=1\}$ is AC equivalent to the empty presentation.
Question (4) may be phrased as asking ``is this also so for $\mathcal{P}\cup\{t=1\}$"?
This question reflects the fact that knot groups may have infinitely many essentially distinct
normal generators \cite{Su85}.
Even if (2) holds it may not be true that 2-knots with deficiency-1 groups are ribbon knots.

Resolution of  (5) may depend on progress in 4-dimensional topology,
although it could also be viewed as a question about the geometric information encoded in a LOT.
(See \cite{Ho85}.)

Question (6) is part of the Whitehead Conjecture, 
while (7) is more of a wish for simplicity than an expectation.

\newpage


\begin{thebibliography}{99}

\bibitem{AS88} Aitchison, I. R. and Silver, D.S. On certain fibred ribbon disc pairs,

Trans. Amer. Math. Soc. 306 (1988), 529--551.

\bibitem{Co83} Cochran, T.D. Ribbon knots in $S^4$,

J. London Math. Soc. 28 (1983), 563--576.

\bibitem{Hi1} Hillman, J.A. \textit{Algebraic Invariants of Links},

2nd edition, World Scientific Publ. Co. (2012).

\bibitem{Hi2} Hillman, J.A. \textit{Four-Manifolds, Geometries and Knots},

GT Monograph 5, Geometry and Topology Publ. Inc. (2002) (Revised 2007, 2014).

\bibitem{Ho85} Howie, J. The asphericity of ribbon disc complements,

Trans. Amer. Math. Soc. 289 (1985), 281-302.

\bibitem{Ka08} Kawauchi, A. The first Alexander $\mathbb{Z}[\mathbb{Z}]$-modules of surface-links and of virtual links,

in \textit{The Zieschang Gedenkschrift},

GT Monograph 14, Geometry and Topology Publ. Inc. (2008), 353--371.

\bibitem{Le77} Levine, J. Knot Modules. I, 

Trans. Amer. Math. Soc. 229 (1977), 1--50.

\bibitem{Ly50} Lyndon, R.C. Cohomology theory of groups with a single defining relator,

Ann. Math. 52 (1950), 650--665.

\bibitem{St65} Stallings, J. Homology and central series of groups,

J. Algebra 2 (1965), 170--181.

\bibitem{Su85} Suciu, A. Infinitely many ribbon knots with the same fundamental group,

Math. Proc. Cambridge Philos. Soc.  98 (1985), 481--492.

\bibitem{Tr74} Trotter, H.F. Torsion-free metabelian groups with infinite cyclic commutator quotient,

in \textit{The Theory of Groups} (edited by M.F.Newman), 

Springer Lecture Notes in Mathematics 372 (1974), 655--666.

\bibitem{Ya69} Yajima, T. On a characterization of knot groups of some knots in $\mathbb{R}^4$,

Osaka Math. J. 6 (1969), 435--446.

\bibitem{Yo81} Yoshikawa, K. On fibering a class of $n$-knots,

Math. Seminar Notes Kobe 9 (1981), 241--245.

\bibitem{Yo82} Yoshikawa, K. A note on Levine's conditions for knot groups,

Math. Seminar Notes Kobe 10 (1982), 633--636.

\bibitem{Yo88} Yoshikawa, K. A ribbon knot group which has no free base,

Proc. Amer. Math. Soc. 102 (1988), 1065--1070.

\end{thebibliography}
\end{document}